\theoremstyle{definition}
\newtheorem{thm}{Theorem}[section]
\newtheorem{lem}[thm]{Lemma}
\newtheorem{cor}[thm]{Corollary}
\newtheorem{rmk}[thm]{Remark}
\def\ccite#1{\textcolor{Red}{\cite{#1}}}
\numberwithin{equation}{section}
\begin{document}

\title[A\lowercase{nticommutator} N\lowercase{orm} 
\lowercase{of} P\lowercase{rojection} O\lowercase{perators}]
{\Large A\lowercase{nticommutator} N\lowercase{orm} F\lowercase{ormula}
\\ \lowercase{for} P\lowercase{rojection} O\lowercase{perators} 
}

\author{S\lowercase{am} W\lowercase{alters}}
\dedicatory{\SMALL {\rm University of Northern British Columbia}}
\date{April 3, 2016} 
\address{Department of Mathematics \& Statistics, University  of Northern B.C., Prince George, B.C. V2N 4Z9, Canada.}
\email[]{walters@unbc.ca}
\subjclass[2000]{46C07 46L05 47A05 47A30 47A50 47A62 47A63 47B47 47L30 15A60 46L80 46L40 46L35}
%\dedicatory{}
\keywords{Hilbert space, operator, projection, norm, anticommutator, commutator, matrices}
\urladdr{http://hilbert.unbc.ca}

\begin{abstract} 
We prove that for any two projection operators $f,g$ on Hilbert space, their anticommutator norm is given by the formula \[\|fg + gf\| = \|fg\| + \|fg\|^2.\] The result demonstrates an interesting contrast between the commutator and anticommutator of two projection operators on Hilbert space. Specifically, the norm of the anticommutator $\|fg + gf\|$ is a simple quadratic function of the norm $\|fg\|$ while the commutator norm $\|fg - gf\|$ is not a function of $\|fg\|$. Nevertheless, the result gives the following bounds that are functions of  $\|fg\|$ on the commutator norm: $\|fg\| - \|fg\|^2 \le \|fg - gf\| \le \|fg\|$. 
\end{abstract}

\maketitle

%%%%%%%%%%%%%%%%%%%%%%%%%%%%%%%%%%%%%%%%%%%%%%%%%%%%%%%%%%%%%%%%%%%%
{\Large{\section{\bf The Main Result}}}

The main result of this paper is proving the following norm formula.

\medskip

\begin{thm}\label{fggf}
For any two projection operators $f,g$ on Hilbert space,
\begin{align}
\|fg + gf\| \ &= \ \|fg\| + \|fg\|^2.
\end{align}
\end{thm}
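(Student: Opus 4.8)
The plan is to reduce the general statement to an explicit finite‑dimensional computation by means of the classical structure theory of a pair of projections. By Halmos's ``two subspaces'' theorem, after an orthogonal decomposition $H = H_{11}\oplus H_{10}\oplus H_{01}\oplus H_{00}\oplus H'$ the pair $(f,g)$ is unitarily equivalent to: the pair $(1,1)$ on $H_{11}$, $(1,0)$ on $H_{10}$, $(0,1)$ on $H_{01}$, $(0,0)$ on $H_{00}$, and on the ``generic'' part $H'\cong H_0\otimes\mathbb{C}^2$ the pair $f'=1\otimes\left(\begin{smallmatrix}1&0\\0&0\end{smallmatrix}\right)$, $g'$ the field of rank‑one projections $\left(\begin{smallmatrix}\cos^2\theta & \cos\theta\sin\theta\\ \cos\theta\sin\theta & \sin^2\theta\end{smallmatrix}\right)$, where $\theta$ ranges (via a positive operator) over $(0,\pi/2)$. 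Since the norm of a direct sum or direct integral of operators is the supremum of the norms of the pieces, it suffices to verify the identity on each type of summand and then take suprema.

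On the four degenerate summands the identity is immediate: on $H_{11}$ one has $fg+gf=2$ and $fg=1$, so both sides are $2$; on $H_{10}$, $H_{01}$, $H_{00}$ both sides vanish. On a generic fibre, writing $c=\cos\theta\in(0,1)$, $s=\sin\theta$, one computes $fg+gf=\left(\begin{smallmatrix}2c^2 & cs\\ cs & 0\end{smallmatrix}\right)$, a self‑adjoint matrix of trace $2c^2$ and determinant $-c^2s^2$, hence with eigenvalues $c^2\pm c$, so $\|fg+gf\|=c^2+c$ on that fibre. On the other hand $(fg)(fg)^*=fgf=\left(\begin{smallmatrix}c^2 & 0\\0&0\end{smallmatrix}\right)$, so $\|fg\|=c$, and indeed $\|fg\|+\|fg\|^2=c+c^2$ agrees.

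It then remains to assemble the pieces. Let $c_*\in[0,1]$ be the supremum of the values $c=\cos\theta$ occurring on $H'$ (taken to be $0$ if $H'=0$). Taking suprema over all summands, $\|fg\|=1$ if $H_{11}\neq 0$ and $\|fg\|=c_*$ otherwise, while $\|fg+gf\|=2$ if $H_{11}\neq 0$ and otherwise equals $\sup\bigl(\{0\}\cup\{c+c^2:c\le c_*\}\bigr)=c_*+c_*^2$, using that $t\mapsto t+t^2$ is continuous and increasing on $[0,1]$. In both cases $\|fg+gf\|=\|fg\|+\|fg\|^2$, as claimed. The same bookkeeping can instead be phrased through the self‑adjoint operator $h=f+g$: since $fg+gf=h^2-h$ one gets $\|fg+gf\|=\sup_{t\in\sigma(h)}|t^2-t|$, and the needed spectral input becomes $\|h\|=1+\|fg\|$ together with $\sigma(h)\subseteq\{0\}\cup[\,1-\|fg\|,\,1+\|fg\|\,]$, on which the maximum of $|t^2-t|$ is attained at the top endpoint $1+\|fg\|$, giving $\|fg\|+\|fg\|^2$.

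The main obstacle is the first step: either one invokes the two‑projections structure theorem as a black box, or one supplies a self‑contained substitute. A direct substitute works in the operator‑matrix picture $f=\left(\begin{smallmatrix}1&0\\0&0\end{smallmatrix}\right)$, $g=\left(\begin{smallmatrix}a & b\\ b^* & d\end{smallmatrix}\right)$ relative to $H=fH\oplus(1-f)H$: from $g=g^*=g^2$ one extracts $bb^*=a-a^2$ with $0\le a\le 1$, and $fg+gf=\left(\begin{smallmatrix}2a & b\\ b^* & 0\end{smallmatrix}\right)$. The lower bound $\|fg+gf\|\ge\|fg\|+\|fg\|^2$ then comes from feeding approximate eigenvectors $\eta$ of $a$ with $a\eta\approx x\eta$, $x$ near $\|a\|=\|fg\|^2$, into the nearly invariant plane $\mathrm{span}(\eta,b^*\eta)$, on which $fg+gf$ looks like $\left(\begin{smallmatrix}2x & \sqrt{x(1-x)}\\ \sqrt{x(1-x)} & 0\end{smallmatrix}\right)$; the matching upper bound reduces, via Cauchy--Schwarz and $bb^*=a-a^2$, to an elementary one‑variable optimization. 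In that self‑contained route the only genuinely delicate point is the bookkeeping with suprema that may fail to be attained and with possibly trivial summands.
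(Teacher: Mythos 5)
Your argument is correct, but it takes a genuinely different route from the paper. You reduce everything to Halmos's two--subspaces decomposition, verify the identity fibrewise on $2\times2$ blocks (where both sides equal $c+c^2$ with $c=\cos\theta$), and then take suprema using the monotonicity of $t\mapsto t+t^2$; this is exactly the alternative that the paper acknowledges in its closing note but deliberately avoids, since the point of the paper is a self-contained proof. The paper instead splits the identity into two inequalities: the upper bound $\|fg+gf\|\le\|fg\|+\|fg\|^2$ comes from an explicit polynomial expansion of $(fg+gf)^{2N}$ in terms of $fg$, $gf$, $fgf$, $gfg$ combined with the elementary estimate $\|(fg)^m\|\le\|fg\|^{2m-1}$, and the lower bound comes from the same operator-matrix picture you sketch at the end ($f$ diagonal, $g$ with blocks $D,V,V^*,D'$), but rather than diagonalizing $D$ it tracks the northwest corner $F_n(D)$ of $(fg+gf)^n$ through a Fibonacci-type recursion and takes $n$-th roots. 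What your approach buys is brevity and conceptual transparency, at the cost of importing the structure theorem (and, in the ``direct integral of angles'' phrasing you use, some measure-theoretic care that the operator form of Halmos's theorem --- commuting positive contractions $C,S$ with $C^2+S^2=1$ --- lets you sidestep); what the paper's approach buys is independence from that machinery. Two small cautions on your write-up: your second, spectral reformulation via $h=f+g$ silently assumes the nontrivial identity $\|f+g\|=1+\|fg\|$, so it is not a shortcut but a restatement of an equivalent fact; and your proposed ``self-contained substitute'' for the lower bound via approximate eigenvectors is only sketched --- making the plane $\mathrm{span}(\eta,b^*\eta)$ genuinely almost invariant and controlling the errors is essentially the work the paper does with the $F_n(D)$ recursion.
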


In particular, the anticommutator norm of projection operators is a simple quadratic function of the norm of their product. This is quite different from the commutator $fg - gf$ of projections since its norm is not a function of the norm of $fg$ (see remark below), nor is $\|fg\|$ a function of $\|fg-gf\|$. In view of Theorem \ref{fggf} we can nevertheless give bounds on the commutator norm that are functions of the norm $\|fg\|$. In this connection, the above theorem then has the following the consequence.

\begin{cor}\label{cor}
For any two projection operators $f,g$ on Hilbert space, one has
\[
\|fg\| - \|fg\|^2 \ \le \ \|fg - gf\| \ \le\ \|fg\| .
\]
\end{cor}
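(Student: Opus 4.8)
The plan is to reduce the commutator norm to the single quantity $\|fg(1-f)\|$ and bound that from above and below. I would first dispose of the \emph{lower} bound, which falls out of Theorem~\ref{fggf} almost immediately: since $2fg = (fg+gf) + (fg-gf)$, the triangle inequality gives $2\|fg\| \le \|fg+gf\| + \|fg-gf\|$, and substituting $\|fg+gf\| = \|fg\| + \|fg\|^2$ from the theorem and rearranging yields $\|fg-gf\| \ge \|fg\| - \|fg\|^2$.

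For the \emph{upper} bound $\|fg-gf\| \le \|fg\|$, the key step is the operator identity
$fg - gf = fg(1-f) - (1-f)gf = A - A^*$, where $A := fg(1-f)$; indeed $fg(1-f) - (1-f)gf = (fg - fgf) - (gf - fgf) = fg - gf$, and $A^* = (1-f)gf$. Because $(1-f)f = 0$ one gets $A^2 = fg\,[(1-f)f]\,g(1-f) = 0$ and likewise $(A^*)^2 = 0$, so $(fg-gf)^*(fg-gf) = (A^*-A)(A-A^*) = A^*A + AA^*$. Now $AA^* = fg(1-f)gf$ is supported on $\mathrm{ran}(f)$ while $A^*A = (1-f)gfg(1-f)$ is supported on $\mathrm{ran}(1-f)$; relative to the orthogonal decomposition $H = \mathrm{ran}(f)\oplus\mathrm{ran}(1-f)$ the positive operator $A^*A + AA^*$ is therefore block diagonal, so $\|(fg-gf)^*(fg-gf)\| = \max\{\|AA^*\|,\|A^*A\|\} = \|A\|^2$. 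Hence $\|fg-gf\| = \|A\| = \|fg(1-f)\| \le \|fg\|\,\|1-f\| \le \|fg\|$, using $\|1-f\|\le 1$ (valid whether or not $f = 1$, so no edge cases arise).

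I do not expect a serious obstacle; the one point requiring care is the support claim for $A^*A$ and $AA^*$ — that $(1-f)AA^* = 0$ and $fA^*A = 0$, both consequences of $f(1-f)=0$ — which is what licenses taking the norm of the sum to be the maximum of the norms. As a remark I would note that the same identity also yields the lower bound without invoking Theorem~\ref{fggf}: writing $A = fg - fgf$ and observing that $fgf = (fg)(fg)^*$, so $\|fgf\| = \|fg\|^2$, gives $\|fg-gf\| = \|A\| \ge \|fg\| - \|fgf\| = \|fg\| - \|fg\|^2$. I would still present the triangle-inequality derivation as the main line, since the corollary is advertised as a consequence of the theorem, but the remark shows the estimate is in fact elementary once the identity $fg-gf = A-A^*$ is in hand.
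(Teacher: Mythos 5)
Your proof is correct and follows essentially the same route as the paper: the lower bound via the triangle inequality applied to Theorem~\ref{fggf}, and the upper bound via the identity $(fg-gf)^*(fg-gf) = A^*A + AA^*$ with $A = fg(1-f)$ and the orthogonality of the supports of the two positive summands, which is exactly the paper's Lemma~\ref{lemma}. Your closing remark --- that $\|fg-gf\| = \|fg - fgf\| \ge \|fg\| - \|fgf\| = \|fg\| - \|fg\|^2$ gives the lower bound without invoking the theorem --- is a nice observation the paper does not make explicit, though it does record the identity $\|fg-gf\| = \|fg-fgf\|$ from which it follows.
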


\begin{proof}
By Theorem 1.1 we have
\[
\|fg-gf\| = \|2gf - (fg+gf)\| \ge 2\|gf\| - \|fg+gf\| \ge \|fg\| - \|fg\|^2.
\]
Lemma \ref{lemma} below gives the inequality $\|fg - gf\| \le \|fg\|$.
\end{proof}

\medskip

(Lemma \ref{lemma} and its proof are given at the end of the paper.)

\medskip

We made an application of Theorem \ref{fggf} in \ccite{SWnearlyorthog} in order to obtain sharp upper bound estimates for projection operator on Hilbert space that are nearly orthogonal to their (unitary) symmetries. Specifically,  Theorem \ref{fggf} has lead us to obtain the relatively larger bound of $0.455$ that would guarantee that if a projection operator $e$ and a Hermitian unitary operator $w$ on Hilbert space satisfy $\|ewe\| < 0.455$, then a projection operator $q$ exists such that\footnote{The condition $qwq=0$, of course, just means that $q$ is orthogonal to its symmetric image $wqw^*$ under $w$. And so the condition that $\|ewe\|$ is ``small" means that $e$ is nearly orthogonal to its symmetry.} $qwq=0$ and $\|e-q\| \le \frac12\|ewe\| + 4\|ewe\|^2$. (Further, $q$ lies in the C*-subalgebra of $\mathcal B(\mathcal H)$ generated by $e$ and $wew^*$.)

\medskip

\begin{rmk}
It is not hard to see that the commutator norm of projections $\|fg - gf\|$ is generally not a function of the norm $\|fg\|$. For instance, if $f=g\not=0$, then $\|fg\|=1$ and their commutator is zero, while if $p,q$ are the two generating projections of the universal C*-algebra generated by two projections, then $\|pq\|=1$ and $\|pq-qp\| = \frac12$. Conversely, neither is the norm $\|fg \|$ a function of the norm $\|fg-gf\|$, since for the 2 by 2 matrix projections
$a = [\smallmatrix 1 & 0 \\ 0 & 0 \endsmallmatrix], \ 
b = \frac12[\smallmatrix 1 & 1 \\ 1 & 1 \endsmallmatrix]$, one has $\|ab-ba\| = \frac12 = \|pq-qp\|$ while $\|ab\| = \frac1{\sqrt2} \not= 1 = \|pq\|$. 
\end{rmk}

\begin{rmk}
We caution that it is not enough to check equalities such as that in Theorem \ref{fggf} for 2 by 2 matrices over the complex numbers and expect that they  generally hold for all projections. For example, one can check that the equation 
\begin{equation}\label{2by2}
\|fg - gf\|^2 = \|fg\|^2(1-\|fg\|^2)
\end{equation}
holds for all projections in $M_2(\mathbb C)$. One can, however, give simple examples of 4 by 4 projections for which equation \eqref{2by2} does not hold. Equation \eqref{2by2} also does not hold for the two projections $p,q$ of the universal C*-algebra generated by two projections since they do not commute and $\|pq\|=1$.
\end{rmk}

\noindent{\bf Acknowledgments.} 
This research was partly supported by a grant from the Natural Science and Engineering Council of Canada.

\bigskip

%%%%%%%%%%%%%%%%%%%%%%%%%%%%%%
%%%%%%%%%%%%%%%%%%%%%%%%%%%%%%
{\Large{\section{\bf Proof of $\|fg+gf\| \le \|fg\| + \|fg\|^2$}}}

\medskip

We shall use the following lemma.

\medskip

\begin{lem}\label{fg}
For any two projections $f,g$ and $m\ge1$ we have $\|(fg)^m\| \le \|fg\|^{2m-1}$.
\end{lem}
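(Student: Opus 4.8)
The plan is to prove the sharper statement $\|(fg)^m\| = \|fg\|^{2m-1}$, which in particular gives the asserted inequality. The first move is the C*-identity together with $f^*=f$ and $g^*=g$, which turn $((fg)^m)^*$ into $(gf)^m$:
\[
\|(fg)^m\|^2 = \|((fg)^m)^*(fg)^m\| = \|(gf)^m(fg)^m\| .
\]
The reason for passing to $(gf)^m(fg)^m$ is that an $f\cdot f$ now sits at the seam; since the only relations binding $f$ and $g$ are $f^2=f$ and $g^2=g$, every word in $f,g$ collapses to an alternating word, and here I claim
\[
(gf)^m(fg)^m = (gfg)^{2m-1} \qquad (m\ge 1).
\]
I would prove this by induction on $m$: the base case $m=1$ is $(gf)(fg)=gf^2g=gfg$, and the inductive step follows from the one-step collapses $(gf)(gfg)=(gfg)^2$ and $(gfg)(fg)=(gfg)^2$, since then $(gf)^{m+1}(fg)^{m+1}=(gf)\,[(gf)^m(fg)^m]\,(fg)=(gf)(gfg)^{2m-1}(fg)=(gfg)^{2(m+1)-1}$.

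With that identity in hand the rest is immediate. The operator $gfg=(gf)(gf)^*$ is positive, so $\|(gfg)^{2m-1}\|=\|gfg\|^{2m-1}$; and $\|gfg\|=\|(gf)(gf)^*\|=\|gf\|^2=\|fg\|^2$ because $\|gf\|=\|(fg)^*\|=\|fg\|$. Substituting back, $\|(fg)^m\|^2=\|fg\|^{2(2m-1)}$, hence $\|(fg)^m\|=\|fg\|^{2m-1}$, which is in particular $\le \|fg\|^{2m-1}$.

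I do not anticipate a real obstacle. The only step requiring care is the word identity $(gf)^m(fg)^m=(gfg)^{2m-1}$, and it is purely formal, using nothing beyond $f^2=f$ and $g^2=g$ (no self-adjointness and no commutation are involved). As a sanity check one may verify $m=2$ directly, $(gf)^2(fg)^2=gfgf\cdot fgfg=gfgfgfg=(gfg)^3$, and test the final formula on the $2\times2$ model $f=[\smallmatrix 1&0\\0&0\endsmallmatrix]$, $g=[\smallmatrix c^2&cs\\cs&s^2\endsmallmatrix]$ with $c^2+s^2=1$, where $(fg)^m=c^{2m-2}\,fg$ and indeed $\|(fg)^m\|=|c|^{2m-1}=\|fg\|^{2m-1}$. (Alternatively one could derive the lemma from the Halmos structure theorem for a pair of projections, but the direct computation above is shorter.)
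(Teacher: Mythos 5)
Your proof is correct, and it actually establishes more than the lemma asserts: you obtain the equality $\|(fg)^m\| = \|fg\|^{2m-1}$, not merely the upper bound. The route is genuinely different from the paper's. The paper proves the factorization $(fg)^m = (fgf)^{m-1}(fg)$ by induction and then applies submultiplicativity together with $\|fgf\| = \|fg\|^2$ and the positivity of $fgf$; the submultiplicative step $\|(fgf)^{m-1}(fg)\| \le \|(fgf)^{m-1}\|\,\|fg\|$ is precisely what limits that argument to an inequality. You instead apply the C*-identity to $(fg)^m$ and collapse the resulting word to $(gfg)^{2m-1}$, a power of the single positive operator $gfg = (gf)(gf)^*$, for which the norm of a power equals the power of the norm; this yields $\|(fg)^m\|^2 = \|gfg\|^{2m-1} = \|fg\|^{2(2m-1)}$ exactly. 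Your inductive verification of the word identity $(gf)^m(fg)^m = (gfg)^{2m-1}$ is sound and, as you note, uses only $f^2=f$ and $g^2=g$. The two arguments are of comparable length, so the cost of your approach is essentially nil, and the gain is the sharp value of $\|(fg)^m\|$, which the paper does not record; since Section 2 only needs the inequality, either proof serves the paper's purposes.
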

\begin{proof}
By induction, one checks the equality $(fg)^m = (fgf)^{m-1} (fg)$ (for $m\ge1$). Using $\|fgf\| = \|fg\|^2$, we have
\[
\|(fg)^m\| \le \|(fgf)^{m-1}\| \|fg\| \le \|fg\|^{2m-2} \|fg\| = \|fg\|^{2m-1}
\]
as required.
\end{proof}

We begin by observing and establishing the following formula for the powers of the anticommutator in terms of polynomials in the operators $fg, gf,$ $fgf$, and $gfg$:
\begin{equation}\label{powersplus}
(fg + gf)^n = P_n(fg) + P_n(gf) + Q_n(fgf) + Q_n(gfg)
\end{equation}
where $P_n, Q_n$ ($n=1,2,\dots$) are polynomials given recursively according to the dynamics
\begin{align}\label{polyPQ}
P_{n+1}(x) &= xP_n(x) + xQ_n(x), \\ 
Q_{n+1}(x) &= P_n(x) + xQ_n(x) \notag
\end{align}
with initial data $P_1(x) = x, \ Q_1(x) = 0$.\footnote{Interestingly, these polynomials turn out to be similar to Fibonacci polynomials as they will be given in very similar closed forms in terms of $\sqrt x$.} The equation \eqref{powersplus} can be checked by induction by making strong use of the fact that $f,g$ are projections. In order to find these polynomials in explicit form we express \eqref{polyPQ} in matrix form
\[
\bmatrix P_{n+1} \\ Q_{n+1} \endbmatrix =
\bmatrix x & x \\ 1 & x \endbmatrix
\bmatrix P_{n} \\ Q_{n} \endbmatrix.
\]
In order to telescope this expression we diagonalize the matrix here as follows: 
\[
\bmatrix x & x \\ 1 & x \endbmatrix = 
S \bmatrix x+\sqrt{x} & 0 \\ 0 & x-\sqrt{x} \endbmatrix S^{-1}, \qquad 
S := \bmatrix \sqrt{x} & -\sqrt{x} \\ 1 & 1 \endbmatrix
\]
(which is easily checked). 

Therefore, we calculate the polynomials as follows
\begin{align*}
\bmatrix P_{n+1} \\ Q_{n+1} \endbmatrix 
&=
\bmatrix x & x \\ 1 & x \endbmatrix^n
\bmatrix P_1 \\ Q_1 \endbmatrix
=
S \bmatrix (x+\sqrt{x})^n & 0 \\ 0 & (x-\sqrt{x})^n \endbmatrix S^{-1}
\bmatrix x \\ 0 \endbmatrix
\\ \\ 
&= 
\frac{1}{2\sqrt{x}}
\bmatrix \sqrt{x} & -\sqrt{x} \\ 1 & 1 \endbmatrix 
\bmatrix (x+\sqrt{x})^n & 0 \\ 0 & (x-\sqrt{x})^n \endbmatrix
\bmatrix 1 & \sqrt{x} \\ -1 & \sqrt{x} \endbmatrix
\bmatrix x \\ 0 \endbmatrix
\\ \\ 
&= 
\frac{1}{2\sqrt{x}}
\bmatrix \sqrt{x} & -\sqrt{x} \\ 1 & 1 \endbmatrix 
\bmatrix (x+\sqrt{x})^n & 0 \\ 0 & (x-\sqrt{x})^n \endbmatrix
\bmatrix x \\ -x \endbmatrix
\\ \\ 
&= 
\frac{1}{2\sqrt{x}}
\bmatrix \sqrt{x} & -\sqrt{x} \\ 1 & 1 \endbmatrix 
\bmatrix x(x+\sqrt{x})^n \\ -x(x-\sqrt{x})^n \endbmatrix
\end{align*}
yielding the closed forms
\begin{align*}
P_{n+1}(x) &= \frac{x}2 \Big[(x+\sqrt{x})^n + (x-\sqrt{x})^n\Big],
\\
Q_{n+1}(x) &= \frac{\sqrt{x}}{2} \Big[(x+\sqrt{x})^n - (x-\sqrt{x})^n\Big].
\end{align*}

Next, we express these using their binomial expansions:
\begin{align*}
(x+\sqrt{x})^n &= \sum_{j=0}^n {n\choose j} x^j x^{\frac12(n-j)}
\\
(x-\sqrt{x})^n &= \sum_{j=0}^n {n\choose j} x^j (-1)^{n-j} x^{\frac12(n-j)}
\end{align*}
Using the notation $\delta_2^k = \frac12 (1+(-1)^k)$ which is 1 when $k$ is even and 0 when $k$ is odd, we can write
\[
P_{n+1}(x) = \frac12 x \sum_{j=0}^n {n\choose j} x^j (1 + (-1)^{n-j}) x^{\frac12(n-j)} 
= 
x \sum_{j=0}^n {n\choose j} x^j \delta_2^{n-j} x^{\frac12(n-j)}.
\]
Let us choose odd $n = 2N-1$ so that
\begin{align*}
P_{2N}(x) &= 
x \sum_{j=0}^{2N-1} {2N-1\choose j} x^j \delta_2^{2N-1-j} x^{\frac12(2N-1-j)}.
\\
&= \sum_{j=0}^{2N-1} {2N-1\choose j} \delta_2^{j-1} x^{N+1 + \frac12(j-1)}
\intertext{Now put $j = 2\ell-1$ where $\ell=1,2,\dots, N$ to get}
P_{2N}(x)
&= \sum_{\ell=1}^{N} {2N-1\choose 2\ell-1}  x^{N+\ell}.
\end{align*}
Now
we can compute the norm estimate at $fg$ (or $gf$) as follows:
\[
\|P_{2N}(fg)\|
\le \sum_{\ell=1}^{N} {2N-1\choose 2\ell-1}  \|(fg)^{N+\ell}\|.
\]
Here we use the inequality $\|(fg)^m\| \le \|fg\|^{2m-1}$ from Lemma \ref{fg} to get
\[
\|P_{2N}(fg)\|
\le \sum_{\ell=1}^{N} {2N-1\choose 2\ell-1}  \|fg\|^{2N+2\ell - 1}
= \|fg\|^{2N-1} \sum_{\ell=1}^{N} {2N-1\choose 2\ell-1}  \|fg\|^{2\ell}
\]
we note that the same bound is the same number for $\|P_{2N}(gf)\|$. At this juncture we make use of the identity 
\[
A_N(a) := \sum_{\ell=1}^{N} {2N-1\choose 2\ell-1}  a^{2\ell} 
= \frac{a}{2}\big[ (1+a)^{2N-1} - (1-a)^{2N-1} \big] 
\]
which we shall call $A_N(a)$ for simplicity, where $a \ge 0$. We can then write 
\[
\|P_{2N}(fg)\| \le \|fg\|^{2N-1} A_N(\|fg\|)
\]
and we obtain
\[
\|P_{2N}(fg)\| + \|P_{2N}(gf)\| \ \le \ 2 \|fg\|^{2N-1} A_N(\|fg\|).
\]
Similarly we work out the norms $\|Q_{2N}(fgf)\|$ and $\|Q_{2N}(gfg)\|$.
\[
Q_{n+1}(x) = \frac{\sqrt{x}}{2} \sum_{j=0}^n {n\choose j} x^j (1- (-1)^{n-j}) x^{\frac12(n-j)}
= \sqrt{x} \sum_{j=0}^n {n\choose j} x^j \delta_2^{n-j-1} x^{\frac12(n-j)}
\]
and again inserting $n = 2N-1$:
\begin{align*}
Q_{2N}(x)
&= \sqrt{x} \sum_{j=0}^{2N-1} {2N-1\choose j} x^j \delta_2^{2N-1-j-1} x^{\frac12(2N-1-j)}
\\
&= \sum_{j=0}^{2N-1} {2N-1\choose j} \delta_2^{j} x^{N + \frac{j}2}
\intertext{put $j = 2\ell$ where $\ell = 0, 1, 2, \dots, N-1$:}
Q_{2N}(x)
&= \sum_{\ell=0}^{N-1} {2N-1\choose 2\ell}  x^{N + \ell}.
\end{align*}
The norm becomes (using $\|(fgf)^m\| \le \|fg\|^{2m}$)
\[
\|Q_{2N}(fgf)\|
\le \sum_{\ell=0}^{N-1} {2N-1\choose 2\ell}  \|(fgf)^{N + \ell}\|
\le \sum_{\ell=0}^{N-1} {2N-1\choose 2\ell}  \|fg\|^{2N + 2\ell}
\]
or 
\[
\|Q_{2N}(fgf)\|
\le \|fg\|^{2N} \sum_{\ell=0}^{N-1} {2N-1\choose 2\ell}  \|fg\|^{2\ell}
= \|fg\|^{2N} B_N(a)
\]
where we use the identity
\[
B_N(a) := \sum_{\ell=0}^{N-1} {2N-1\choose 2\ell}  a^{2\ell} 
= \frac{1}{2}\big[ (1+a)^{2N-1} + (1-a)^{2N-1} \big]
\]
which we shall call $B_N(a)$ for convenience.

Let's write $a = \|fg\|$. Then we get
\begin{align*}
\|(fg+gf)^{2N}\| &= \|P_n(fg) + P_n(gf) + Q_n(fgf) + Q_n(gfg)\|
\\
&\le 2 \|P_n(fg)\| + 2\|Q_n(fgf)\| 
\\
&\le 2 \|fg\|^{2N-1} A_N(\|fg\|) + 2\|fg\|^{2N} B_N(\|fg\|)
\\
&= 2 a^{2N-1} A_N(a) + 2 a^{2N} B_N(a)
\end{align*}
\begin{align*}
\ \ &= 2 a^{2N-1} \cdot \frac{a}{2}\big[ (1+a)^{2N-1} - (1-a)^{2N-1} \big]
+ 2 a^{2N} \cdot \frac{1}{2}\big[ (1+a)^{2N-1} + (1-a)^{2N-1} \big]
\\
&= a^{2N} \cdot \big[ (1+a)^{2N-1} - (1-a)^{2N-1} \big]
+ a^{2N} \cdot \big[ (1+a)^{2N-1} + (1-a)^{2N-1} \big]
\\
&= 2 a^{2N} \cdot (1+a)^{2N-1}.
\end{align*}
Taking $2N$-th roots,
\[
\|fg+gf\|  \le 2^{1/2N} a (1+a)^{1-\frac1{2N}} 
\]
which in the limit as $N \to\infty$ gives
\[
\|fg+gf\| \le \|fg\| + \|fg\|^2.
\]

\bigskip

%%%%%%%%%%%%%%%%%%%%%%%%%%%%%%
%%%%%%%%%%%%%%%%%%%%%%%%%%%%%%
{\Large{\section{\bf Proof of $\|fg+gf\| \ge \|fg\| + \|fg\|^2$}}}

Any two projection operators $f,g$ on Hilbert space $\mathcal H$ may be represented in matrix block forms as
\[
f = \bmatrix I & 0 \\ 0 & 0 \endbmatrix, \quad 
g = \bmatrix D & V \\ V^* & D' \endbmatrix
\]
with respect to the orthogonal decomposition $\mathcal H = \mathcal M \oplus \mathcal M^\perp$ where $\mathcal M$ is the range of $f$ (and $\mathcal M^\perp$ that of $1-f$). Here, $D, D'$ are positive operators on $\mathcal M$ and $\mathcal M^\perp$, respectively, and $V:\mathcal M^\perp \to \mathcal M$, satisfying the relations 
\[
D - D^2 = VV^*, \quad DV + VD' = V, \quad D' - D'^2 = V^*V.
\]
(in view of $g$ being a projection). Since
\[
fg(fg)^* 
= \bmatrix D & V \\ 0 & 0 \endbmatrix \bmatrix D & 0 \\ V^* & 0 \endbmatrix = \bmatrix D^2+VV^* & 0 \\ 0 & 0 \endbmatrix
= \bmatrix D & 0 \\ 0 & 0 \endbmatrix
\]
we see that $\|fg\|^2 = \|D\|$. We now work out the powers of the anticommutator as follows. First, we have
\[
fg + gf = \bmatrix D & V \\ 0 & 0 \endbmatrix + \bmatrix D & 0 \\ V^* & 0 \endbmatrix = \bmatrix 2D & V \\ V^* & 0 \endbmatrix.
\]
We observe that the powers of this anticommutator have the form
\[
(fg + gf)^n = \bmatrix F_n(D) & F_{n-1}(D)V \\ \star & \star \endbmatrix
\]
where $F_n(x)$ is a certain sequence of polynomials with integer coefficients -- with initial data $F_1(x) = 2x, F_0(x) = 1$. We do not need to know the $\star$ entries at the bottom of the matrix because we are interested in the northwestern\footnote{It is rather interesting in this case that the information regarding the anticommutator norm is contained in this block for large $n$.} block $F_n(D)$ of $(fg + gf)^n$. Multiplying
\[
\bmatrix F_n(D) & F_{n-1}(D)V \\ \star & \star \endbmatrix 
\bmatrix 2D & V \\ V^* & 0 \endbmatrix 
= \bmatrix 2DF_n(D) + F_{n-1}(D)VV^*  & F_n(D)V \\ \star & \star \endbmatrix
\]
or
\[
(fg + gf)^{n+1} = \bmatrix 2DF_n(D) + (D - D^2)F_{n-1}(D)  & F_n(D)V \\ \star & \star \endbmatrix
\]
from which we see that the polynomial sequence has the Fibonacci-type recursion relation
\[
F_{n+1}(x) =  2xF_n(x) + (x - x^2)F_{n-1}(x).
\]
Telescoping this as one would with ordinary Fibonacci numbers, one eventually gets
\[
F_n(x) 
= \frac12 x^{n/2} \left[ (\sqrt x + 1)^{n+1} - (\sqrt x - 1)^{n+1} \right].
\]
(Indeed, one can check this by induction.) This polynomial function is (for each $n$) an increasing function over the interval $[0,\infty)$. Therefore, since $D$ is a positive operator, we have
\[
\|(fg + gf)^n\| \ge \|F_n(D)\| = F_n(\|D\|) = F_n(\|fg\|^2) 
= \frac12 \|fg\|^n \left[ (\|fg\| + 1)^{n+1} - (\|fg\| - 1)^{n+1} \right]
\]
\[
= \frac12 \|fg\|^n (\|fg\| + 1)^{n+1} \left[ 1 - \left(\frac{\|fg\| - 1}{\|fg\| + 1}\right)^{n+1} \right].
\]
Taking $n$-th roots (noting that the anticommutator $fg+gf$ is a Hermitian operator)
\[
\|fg + gf\| \ge \frac1{2^{1/n}} \|fg\|\, (\|fg\| + 1)^{1+\frac1n} \left[ 1 - \left(\frac{\|fg\| - 1}{\|fg\| + 1}\right)^{n+1} \right]^{1/n}.
\]
Letting $n\to\infty$ the right side converges to $\|fg\| + \|fg\|^2$ (since $(1-c^n)^{1/n} \to 1$ for\footnote{If $-1<c<1$ then $-1< c^n \le |c| < 1$ for each $n\ge1$, which gives $0<1-|c| \le 1-c^n < 2$ and the result follows by taking $n$-th roots.} any $-1 < c < 1$).

This completes the proof of Theorem \ref{fggf}.

\bigskip

We end the paper with the proof of the lemma used by Corollary \ref{cor}.

\medskip

\begin{lem}\label{lemma}
For any two projections $f,g$ on Hilbert space, $\|fg - gf\| \le \|fg\|$. Further, $\|fg - gf\| = \|fg-fgf\|$.
\end{lem}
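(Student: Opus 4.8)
The plan is to prove both statements using the same block-matrix representation employed in Section 3, which automatically reduces the question about abstract projections to a clean computation. Writing $f = \bigl[\begin{smallmatrix} I & 0 \\ 0 & 0 \end{smallmatrix}\bigr]$ and $g = \bigl[\begin{smallmatrix} D & V \\ V^* & D' \end{smallmatrix}\bigr]$ with respect to $\mathcal H = \mathcal M \oplus \mathcal M^\perp$, one computes directly that
\[
fg - gf = \bmatrix D & V \\ 0 & 0 \endbmatrix - \bmatrix D & 0 \\ V^* & 0 \endbmatrix = \bmatrix 0 & V \\ -V^* & 0 \endbmatrix,
\]
while $fg - fgf = \bigl[\begin{smallmatrix} D & V \\ 0 & 0 \end{smallmatrix}\bigr] - \bigl[\begin{smallmatrix} D & 0 \\ 0 & 0 \end{smallmatrix}\bigr] = \bigl[\begin{smallmatrix} 0 & V \\ 0 & 0 \end{smallmatrix}\bigr]$.

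\emph{Step 1: the equality $\|fg - gf\| = \|fg - fgf\|$.} For the commutator, $(fg-gf)(fg-gf)^* = \bigl[\begin{smallmatrix} VV^* & 0 \\ 0 & V^*V \end{smallmatrix}\bigr]$, so $\|fg-gf\|^2 = \max(\|VV^*\|, \|V^*V\|) = \|V\|^2$ (the two norms coincide). For the truncation, $(fg-fgf)(fg-fgf)^* = \bigl[\begin{smallmatrix} VV^* & 0 \\ 0 & 0 \end{smallmatrix}\bigr]$, so $\|fg - fgf\|^2 = \|VV^*\| = \|V\|^2$ as well. Hence both equal $\|V\|$.

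\emph{Step 2: the inequality $\|fg - gf\| \le \|fg\|$.} From Step 1 it suffices to bound $\|V\|$ by $\|fg\|$. Now $fg = \bigl[\begin{smallmatrix} D & V \\ 0 & 0 \end{smallmatrix}\bigr]$, so $\|fg\|^2 = \|fg(fg)^*\| = \|D^2 + VV^*\| = \|D\|$, using the relation $D - D^2 = VV^*$ exactly as in Section 3. On the other hand $\|V\|^2 = \|VV^*\| = \|D - D^2\|$. Since $D$ is a positive contraction (being a compression of the projection $g$), its spectrum lies in $[0,1]$, and on that interval $t - t^2 \le t$; by the spectral mapping / functional calculus this yields $\|D - D^2\| \le \|D\|$, i.e. $\|V\|^2 \le \|fg\|^2$. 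This gives $\|fg - gf\| = \|V\| \le \|fg\|$.

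\emph{The main obstacle} is minor and bookkeeping in nature: one must justify that $D$ is a positive contraction and that $\|D - D^2\| \le \|D\|$ follows from the spectral inclusion $\mathrm{sp}(D) \subseteq [0,1]$ — this is where one uses that $g$ (hence its compression $D$) has norm at most $1$ and is positive, together with the inequality $0 \le t - t^2 \le t$ on $[0,1]$. Everything else is a direct matrix computation already set up in Section 3. An alternative, representation-free route to Step 2 would be to write $\|fg - fgf\| = \|(fg)(1-f)\| \le \|fg\|\,\|1-f\| = \|fg\|$, which sidesteps the functional calculus entirely; I would likely present this one-line argument as the proof of the inequality and keep the block computation only for the equality in Step 1.
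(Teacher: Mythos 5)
Your proof is correct, but it runs along a different track from the paper's. The paper's argument is representation-free: it computes $\|fg-gf\|^2 = \|fgf + gfg - fgfg - gfgf\|$ and observes that this operator equals $uu^* + u^*u$ with $u = fg(1-f)$, a sum of two positive operators supported on the orthogonal ranges of $f$ and $1-f$, so its norm is $\max(\|uu^*\|,\|u^*u\|) = \|u\|^2$; this gives $\|fg-gf\| = \|fg-fgf\| = \|fg(1-f)\| \le \|fg\|$ in one stroke. You instead invoke the Halmos-type block representation from Section 3, where the same orthogonal decomposition appears concretely as the block-diagonal operator $\operatorname{diag}(VV^*, V^*V)$, and both norms come out as $\|V\|$; your Step 2 then bounds $\|V\|^2 = \|D - D^2\|$ by $\|D\| = \|fg\|^2$ via functional calculus on the positive contraction $D$ (equivalently, $0 \le D - D^2 \le D$). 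Everything checks out: the block computations, the identity $\|VV^*\| = \|V^*V\| = \|V\|^2$, and the spectral estimate are all sound. What the paper's route buys is self-containedness (no representation needed) and a one-line inequality $\|fg(1-f)\| \le \|fg\|$ in place of your spectral argument; what your route buys is that the ``orthogonal positive operators'' trick becomes a visible block-diagonal structure rather than an identity one must spot. Your closing remark --- that $\|fg - fgf\| = \|fg(1-f)\| \le \|fg\|$ sidesteps the functional calculus --- is exactly the paper's mechanism, so adopting it would bring your proof essentially in line with the published one.
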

\begin{proof}
Write
\[
\|fg - gf\|^2 = \|(fg-gf)^*(fg-gf)\| = \| fgf + gfg - fgfg - gfgf \|
\]
and note that the operator in the last norm can be written as the sum of two orthogonal positive operators:
\[
fgf + gfg - fgfg - gfgf = fg(1-f)gf + (1-f)gfg(1-f) = uu^* + u^*u
\]
where $u = fg(1-f)$. So its norm is the max of the norms of each term, both of which are equal to $\|u\|^2$. Thus, 
\[
\|fg - gf\| = \|u\| = \|fg-fgf\| = \|fg(1-f)\| \le \|fg\|
\]
as needed.
\end{proof}

\medskip

\noindent{\bf Note added in proof.} A generous colleague pointed out to the author that with a bit more work, one can deduce the anticommutator norm formula from a theorem of Halmos in \ccite{Halmos}. Our proof, however, is self-contained and independent of this -- and the formula (simple as it is) seems to be unknown.

%%%%%%%%%%%%%%%%%%%%%%%%%%%%%%%%%%%%%%%%%%%%%%%%%%%%%%%%%%%%%%%%%%%%%%%%%%%%
%%%%%%%%%%%%%%%%%%%%%%%%%%%%%%%%%%%%    REFERENCES      %%%%%%%%%%%
%%%%%%%%%%%%%%%%%%%%%%%%%%%%%%%%%%%%    REFERENCES      %%%%%%%%%%%
%%%%%%%%%%%%%%%%%%%%%%%%%%%%%%%%%%%%    REFERENCES      %%%%%%%%%%%

\bigskip

\end{document}